\title{\textbf{Quadratic Maps in Two Variables on Arbitrary Fields}}
\author{\textsc{R. Dur\'an D\'{\i}az$^1$}
\and
\textsc{L. Hern\'{a}ndez Encinas$^2$}
\and
\textsc{J. Mu\~{n}oz Masqu\'e$^2$}
\bigskip \\
$^1$ Departamento de Autom\'atica, Universidad de Alcal\'{a},\\
E-28871 Alcal\'{a} de Henares, Spain\\
E-mail: \texttt{raul.duran@uah.es}\\
$^2$ Instituto de Tecnolog\'{\i}as F\'{\i}sicas y de la Informaci\'on (ITEFI)\\
Consejo Superior de Investigaciones Cient\'{\i}ficas (CSIC),\\
E-28006 Madrid, Spain\\
E-mails: \texttt{\{luis, jaime\}@iec.csic.es}
}
\date{}
\newtheorem{theorem}{Theorem}[section]
\newtheorem{proposition}[theorem]{Proposition}
\newtheorem{lemma}[theorem]{Lemma}
\theoremstyle{remark}
\begin{document}

\maketitle

\begin{abstract}
\noindent Let $\mathbb{F}$ be a field of characteristic different
from $2$ and $3$, and let $V$ be a vector space of dimension $2$
over $\mathbb{F}$. The generic classification of homogeneous
quadratic maps $f\colon V\to V$ under the action of the linear
group of $V$, is given and efficient computational criteria to
recognize equivalence are provided.
\end{abstract}

{\small
\noindent \textit{Mathematics Subject Classification 2010:\/}
Primary 15A72; Secondary 11E88, 12E20, 12E30, 13A50, 15A66.
}

\medskip

{\small
\noindent\textit{Keywords:\/} Clifford algebra, homogeneous
quadratic map, invariant function, linear
representation, linear group, symmetric bilinear composition law.
}

\section{Introduction}
Let $\mathbb{F}$ be a field of characteristic $p\neq 2,3$ and let $V$ be a
$2$-dimensional $\mathbb{F}$-vector space. In this paper we classify homogeneous
quadratic maps $f\colon V\to V$ that satisfy certain generic condition to be
introduced later. Since there is a natural bijection between homogeneous
quadratic maps on $V$ and symmetric bilinear composition laws $F\colon V\times
V\to V$, the classification is carried out over the latter considering the
action of the general linear group $GL(V)$.

The topic has not elicited much attention and the literature is scarce. As far
as we know, only our previous work \cite{DMP} has clearly focused this topic.
However, homogeneous quadratic maps play a major role in the dynamics of
discrete systems (see, for example, \cite{DHM:2012:FSA}) and may give rise to
new or revamped one-way functions potentially interesting for cryptographic
applications.

Our purpose in this work is to apply to arbitrary fields the classification
obtained in \cite{DMP}, where such goal was achieved only for the case of an
an algebraically closed field. However, it will become apparent along the coming
sections that the methods employed to classify in the latter case are no longer
applicable. Actually, the main role for the present case is played by the
Clifford algebra associated to the quadratic form defining the symmetric
bilinear law. In particular, this new tool has allowed us to to deal with both
the hyperbolic and the elliptic cases of the quadratic form under a unified
framework. We want to stress that the new methods are totally different from
those used in \cite{DMP} so that the present contribution cannot be qualified as
a plain generalization or extension of the former one.

The main results of the paper can summarized as follows: We give both the general
form of any symmetric bilinear law such that its associated quadratic form
takes the value $-1$, and the explicit expression for the maps in the group of
isometries. It turns out that such maps are parametrized by an element of the
Clifford algebra whose Clifford norm is $1$. Next we compute the isotropy
group, which is discrete.
Last, in order to perform the classification, we resort to the
invariants computed in \cite{DMP} transposed to the case we are dealing with now.
While it is true that this technique do not provide a perfect classification,
we do supply efficient computational criteria, allowing one to recognize such
equivalence.

We stick here, as we did in \cite{DMP}, to the case of a 2-dimensional vector
space $V$. The reason will become clear, since the deployed techniques are
deeply connected to the 2-dimensional case. Apparently, each dimension claims
specific techniques and tools in order to achieve the classification. In a
sense, the procedure shows a kind of ``artistic'' flavor that renders it not
immediately or easily exportable to higher dimensions.

The paper is organized as follows: after a first section explaining some
preliminaries and notation, we focus on the topic of Clifford algebras, making
it apparent the role played by them in the present work; next we classify
generic symmetric bilinear laws, followed by the computation of the isotropy
group; finally we undertake the task of computing the criteria to recognize
the equivalence of symmetric bilinear laws.

\section{Preliminaries and notation}\label{intro_prelim}
If $(v_1,v_2)$ is a basis for $V$, then $f(x)=f_1(x)v_1+f_2(x)v_2$
where
\[
\begin{array}
[c]{l}
f_1(x_1,x_2)=a_1(x_1)^2+2b_1x_1x_2+c_1(x_2)^2,\\
f_2(x_1,x_2)=a_2(x_1)^2+2b_2x_1x_2+c_2(x_2)^2,
\end{array}
\quad
\begin{array}
[c]{l}
a_i,b_i,c_i\in\mathbb{F},1\leq i\leq 2,\\
x=x_1v_1+x_2v_2.
\end{array}
\]
As $p\neq 2$, there is a natural bijection between homogeneous quadratic
maps on $V$ and symmetric bilinear composition laws $F\colon V\times V\to V$,
$F(x,y)=x\star y$ (in short: SBLs), which is given by the polarization
formula, e.g., see \cite[XV, \S \S 2--3]{Lang}. Remember that two bilinear
laws $\star \colon (x,y)\in V^2\mapsto x\star y\in V$ and
$\circ \colon (x,y)\in (V^\prime)^2\mapsto x\circ y\in V^\prime $
are isomorphic---or $GL(V)$-equivalent---if and only if there is
a vector-space isomorphism
$u\colon V\overset{\simeq}{\longrightarrow}V^\prime $
such that,
$\forall (x,y)\in V^2$, $u(x)\circ u(y)=u(x\star y)$.

If $V,V^\prime $ are two $\mathbb{F}$-vector spaces,
the space of bilinear maps is denoted by
$L^2(V,V^\prime)$, with the natural identification
$L^2(V,V)\cong L(V\otimes V,V)\cong \otimes ^2V^\ast\otimes V$.
Hence, the classification problem that we
tackle transforms into a classification problem in the subspace
of symmetric tensors of type $(1,2)$ on the plane,
$S^2V^\ast \otimes V\subset \otimes^2V^\ast \otimes V$.
The natural action of the linear group $GL(V)$ on
$S^2V^\ast \otimes V$ is given by
\begin{equation}
\begin{array}
[c]{l}
\forall x,y\in V,F\in S^2V^\ast \otimes V,\;\forall u\in GL(V),\\
(u\cdot F)(x,y)=u\left( F\left( u^{-1}(x),u^{-1}(y)\right) \right) .
\end{array}
\label{f0}
\end{equation}
Let $(v_1^\ast ,v_2^\ast )$ be the dual basis of $(v_1,v_2)$; i.e.,
$v_i^\ast (v_{j})=\delta_{ij}$. Every $F\in S^2V^\ast \otimes V$ is
written as
\begin{align}
F & =v_1^\ast \otimes v_1^\ast \otimes\left( a_1v_1+a_2v_2\right)
+\left( v_1^\ast \otimes v_2^\ast +v_2^\ast \otimes v_1^\ast \right)
\otimes\left(  b_1v_1+b_2v_2\right)
\label{f3} \\
& +v_2^\ast \otimes v_2^\ast \otimes\left( c_1v_1+c_2v_2\right) ,
\nonumber
\end{align}
or matricially,
\[
\begin{array}
[c]{l}
F(x,y)=\left( \! (x_1,x_2)\left( \!
\begin{array}
[c]{ll}
a_1 & b_1\\
b_1 & c_1
\end{array}
\!\right) \left( \!
\begin{array}
[c]{l}
y_1\\
y_2
\end{array}
\right) \!,(x_1,x_2)\left( \!
\begin{array}
[c]{ll}
a_2 & b_2\\
b_2 & c_2
\end{array}
\!\right) \left( \!
\begin{array}
[c]{l}
y_1\\
y_2
\end{array}
\!\right) \!\right) ,\\
x=x_1v_1+x_2v_2,y=y_1v_1+y_2v_2.
\end{array}
\]
Let $\operatorname*{tr}\colon S^2V^\ast \otimes V\to V^\ast $
be the trace mapping. From \eqref{f3} we obtain
\begin{equation}
\operatorname*{tr}F=(a_1+b_2)v_1^\ast +(b_1+c_2)v_2^\ast .
\label{f6}
\end{equation}
The homomorphism
$F\in S^2V^\ast \otimes V\mapsto \operatorname*{tr}F\in V^\ast $
is proved to be $GL(V)$-equivariant. For a given $x\in V$, let
$F_x\colon V\to V$ be the $\mathbb{F}$-linear endomorphism
\begin{equation}
\forall y\in V,\quad F_x(y)=F(x,y).
\label{F_x}
\end{equation}
For each bilinear symmetric map $F\colon V\times V\to V$, let
$q_F\colon V\to \mathbb{F}$ be the quadratic form
defined by $q_F(x)=\det(F_x)$, where $F_x$ is the endomorphism
defined in \eqref{F_x}. As a computation shows,
\begin{align}
q_F(x)\!\!  &  =\!\!(x_1,x_2)\left(
\!
\begin{array}
[c]{cc}
a_1b_2-a_2b_1 & \!\frac{1}{2}(a_1c_2-a_2c_1)\\
\frac{1}{2}(a_1c_2-a_2c_1) & \!b_1c_2-b_2c_1
\end{array}
\!\right) \!\left(
\!
\begin{array}
[c]{l}
x_1\\
x_2
\end{array}
\! \right)
\label{f5}\\
\!\! & =\!\!\left( a_1b_2\!-\!a_2b_1\right) \left( x_1\right)
^2\!+\!\left( a_1c_2\! -\! a_2c_1\right) x_1x_2\!
+\! \left( b_1c_2\! -\! b_2c_1\right) \left( x_2\right) ^2,
\nonumber
\end{align}
$F$ being given as in \eqref{f3} and $x=x_1v_1+x_2v_2$.

\section{The Clifford algebra of $q_F$\label{Clifford_algebra}}
First of all, let $\star $ be a non-degenerate traceless SBL on $V$,
with associated symmetric bilinear map $F\colon V\times V\to V$,
and let $q=q_F$ be the quadratic form introduced
in the section \ref{intro_prelim}.

Given $x\in V$, the Cayley-Hamilton theorem yields
 $(F_x)^2=-q(x)\cdot \operatorname{id}V$, where
 $F_x$ is the endomorphism defined in \eqref{F_x}. Hence,
 by the universal property of the Clifford algebra (e.g.,
 see \cite[XIX, section 4]{Lang}), the linear map

\[
i\colon V\to \operatorname*{End}(V),\quad i(x)=F_x,
\quad
\forall x\in V,
\]
extends to a homomorphism
$\bar{\imath}\colon C(-q)\to \operatorname*{End}(V)$
from the Clifford algebra of $-q$ to $\operatorname*{End}(V)$.
Since $-q$ is non-degenerate with dimension $2$,
the algebra $C(-q)$ is central simple (e.g., see
\cite[Proposition 11.6--(1)]{EKM}); hence $\bar{\imath}$ is injective,
and since $\dim\operatorname*{End}(V)=4=\dim C(-q)$, we actually
conclude that $\bar{\imath}$ is an isomorphism of $\mathbb{F}$-algebras.

Moreover, $C_0(-q)$ is a quadratic $\mathbb{F}$-algebra and according to
\cite[Proposition 12.1]{EKM}, the form $-q$ represents $1$, i.e., $q$
represents $-1$.

Consequently, a non-degenerate quadratic form $Q\colon V\to \mathbb{F}$
is of the form $Q=q_F$ for some $F$ if and only if $Q$ takes the value $-1$.
If $\mathbb{F}$ is a finite field, then every quadratic form of rank $\geq 2$
on $\mathbb{F}$ takes any value of $\mathbb{F}^\ast $ (see
\cite[1.7. Proposition 4]{Serre}), but this does not necessarily happen
in an arbitrary field.

Let $x\mapsto\bar{x}$ be the conjugation in the Clifford algebra $C(-q)$;
it is the unique anti-automorphism of $C(-q)$ that restricts to $x\mapsto- x$
on $V$. As $q$ is a $2$-dimensional form, it is known that the map
$x\mapsto N(x)=x\cdot \bar{x}$ is multiplicative, maps into the ground field
$\mathbb{F}$ and extends $q$: This is the Clifford norm. Next, we choose
$v_1\in V$ such that $q(v_1)=-1$, leading to $N(v_1)=-1$. We consider
the linear isomorphism
$u\colon V\overset{\simeq}{\longrightarrow}C_0(-q)$,
$u(x)=v_1\cdot x$, $\forall x\in V$, which is actually an isometry from
$(V,q)$ to $(C_0(-q),N)$. A new SBL $\circ $ is defined on
$C_0(-q)$ as follows: $x\circ y=u\left( u^{-1}(x)\star u^{-1}(y)\right) $,
$\forall (x,y)\in C_0(-q)^2$. Hence, $(C_0(-q),\circ)$ is
isomorphic to $(V,F)$, and its associated quadratic form is $N$. As
$-q$ represents $1$, there exists an \emph{orthogonal} basis $(v_1,v_2)$
with respect to the symmetric bilinear form attached to $q$, such that,
\begin{equation}
\begin{array}
[c]{ll}
q(v_1)=-1, & q(v_2)=-\beta,
\end{array}
\quad\text{for some }\beta\in\mathbb{F}^\ast .
\label{basis}
\end{equation}
Accordingly,
$C(-q)=\left\langle 1,v_1,v_2,v_1\cdot v_2\right\rangle $
over $\mathbb{F}$, where the dot denotes the Clifford product,
and $(v_1\cdot v_2)\cdot(v_1\cdot v_2)=-\beta$.

If $x=x_0+x_1v_1+x_2v_2+x_{12}(v_1\cdot v_2)$,
$\bar{x}=x_0-x_1v_1-x_2v_2-x_{12}(v_1\cdot v_2)$, then,
$x\cdot\bar{x}=(x_0)^2-(x_1)^2-\beta(x_2)^2+\beta(x_{12})^2$.

\section{Classification of SBLs}
\begin{proposition}
\label{propos1}
Every SBL on $C_0(-q)$ can be written
in the following form:
\begin{equation}
F_{abc}(x,y)=a\cdot x\cdot y
+b\cdot(\bar{x}\cdot y+x\cdot\bar{y})
+c\cdot\overline{x\cdot y},
\qquad
\forall x,y\in C_0(-q),
\label{F_abc}
\end{equation}
for some $(a,b,c)\in C_0(-q)^3$.
\end{proposition}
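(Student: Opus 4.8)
The plan is to exploit the fact that $C_0(-q)$ is a $2$-dimensional commutative $\mathbb{F}$-algebra, so that $S^2C_0(-q)^\ast\otimes C_0(-q)$ has dimension $3\cdot 2=6$, and to show that the three maps $(x,y)\mapsto x\cdot y$, $(x,y)\mapsto \bar{x}\cdot y+x\cdot\bar{y}$, $(x,y)\mapsto\overline{x\cdot y}=\bar{x}\cdot\bar{y}$ (the last equality holds because conjugation is an anti-automorphism and $C_0(-q)$ is commutative), when their coefficients $a,b,c$ are allowed to range over all of $C_0(-q)$, generate this whole $6$-dimensional space. Since multiplication by a fixed element of $C_0(-q)$ is $\mathbb{F}$-linear, each of $F_{a00}$, $F_{0b0}$, $F_{00c}$ is genuinely bilinear and symmetric (symmetry of the middle term is visible, symmetry of the outer two follows from commutativity of $C_0(-q)$), so \eqref{F_abc} does land in $S^2C_0(-q)^\ast\otimes C_0(-q)$; the content is surjectivity.

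First I would pick the convenient basis: write $e=1$ and $\varepsilon=v_1\cdot v_2\in C_0(-q)$, so that $\varepsilon^2=-\beta$ by the relation recorded just before the Proposition, and $\bar e=e$, $\bar\varepsilon=-\varepsilon$. Then I would simply tabulate the three basic bilinear maps on the basis pairs $(e,e),(e,\varepsilon),(\varepsilon,\varepsilon)$ of $S^2C_0(-q)$, for the two natural choices of coefficient (multiplication by $e$ and multiplication by $\varepsilon$), obtaining six explicit vectors in $C_0(-q)$: for instance $F_{e00}$ sends $(e,e)\mapsto e$, $(e,\varepsilon)\mapsto\varepsilon$, $(\varepsilon,\varepsilon)\mapsto-\beta e$, while $F_{0,e,0}$ sends $(e,e)\mapsto 2e$, $(e,\varepsilon)\mapsto 0$, $(\varepsilon,\varepsilon)\mapsto 2\beta e$, and $F_{00e}$ sends $(e,e)\mapsto e$, $(e,\varepsilon)\mapsto-\varepsilon$, $(\varepsilon,\varepsilon)\mapsto-\beta e$; the $\varepsilon$-scaled versions are obtained by multiplying each output by $\varepsilon$ (using $\varepsilon\cdot\varepsilon=-\beta$). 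Arranging these six outputs as the columns of a $6\times 6$ matrix over $\mathbb{F}$ (rows indexed by the six pairs (basis pair, output-coordinate)), I would check that its determinant is a nonzero element of $\mathbb{F}$ — here is where $p\neq 2$ enters, through the factor $2$ in the middle term, and where $\beta\in\mathbb{F}^\ast$ is used — which proves that the map $(a,b,c)\mapsto F_{abc}$ is an isomorphism $C_0(-q)^3\xrightarrow{\ \simeq\ }S^2C_0(-q)^\ast\otimes C_0(-q)$, in particular surjective, which is exactly the assertion.

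The only real obstacle is the bookkeeping: one must be careful that conjugation is an \emph{anti}-automorphism so that $\overline{x\cdot y}=\bar y\cdot\bar x=\bar x\cdot\bar y$ (the last step legitimate only because $C_0(-q)$ is commutative — which it is, being a quadratic $\mathbb{F}$-algebra), and one must not conflate multiplication in $C_0(-q)$ with the Clifford product in the ambient $C(-q)$. Once the $6\times 6$ determinant is seen to be a nonzero scalar, the Proposition follows immediately, and in fact one gets the stronger statement that the representation \eqref{F_abc} is \emph{unique}, a remark worth recording since it will presumably be used in the subsequent classification.
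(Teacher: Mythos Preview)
Your proposal is correct and follows essentially the same approach as the paper: both arguments observe that $F_{abc}$ is bilinear symmetric, that the assignment $(a,b,c)\mapsto F_{abc}$ is $\mathbb{F}$-linear from a $6$-dimensional source into the $6$-dimensional target $S^2C_0(-q)^\ast\otimes C_0(-q)$, and then conclude by a dimension count. Your explicit $6\times6$ determinant check (nonzero because $p\neq 2$ and $\beta\in\mathbb{F}^\ast$) is in fact more careful than the paper, which simply asserts that the map ``depends on the $6$ parameters'' and concludes; your version also yields the uniqueness of the representation, which the paper leaves implicit.
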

\begin{proof}
We have
$C_0(-q)=\{ x=x_0+x_{12}(v_1\cdot v_2):x_0,x_{12}\in \mathbb{F}\} $.
Hence the elements of degree zero of the Clifford algebra admit
the basis $\{1,v_1\cdot v_2\}$.

The mappings \eqref{F_abc} are obviously $\mathbb{F}$-bilinear
and symmetric, since the Clifford product is $\mathbb{F}$-bilinear
and the conjugation $x\mapsto \bar{x}$ is an $\mathbb{F}$-linear
anti-automorphism. Letting
$a=a_0+a_{12}(v_1\cdot v_2)$, $b=b_0+b_{12}(v_1\cdot v_2) $,
$c=c_0+c_{12}(v_1\cdot v_2)$, it follows that the mappings $F_{abc}$
depend on the $6$ parameters $a_0$, $a_{12}$, $b_0$, $b_{12}$, $c_0$,
$c_{12}$. As $\dim(S^2V^\ast \otimes V)=6$, we can conclude.
\end{proof}

The equations of the isomorphism
$u\colon V\overset{\simeq}{\longrightarrow}C_0(-q)$,
$u(x)=v_1\cdot x$, where $x=x_1v_1+x_2v_2\in V$
(introduced in the section \ref{Clifford_algebra})
and those of its inverse are the following:
\[
u(x)=x_1+x_2(v_1\cdot v_2),\quad u^{-1}
\left( x_0+x_{12}(v_1\cdot v_2)\right)
=x_0v_1+x_{12}v_2.
\]
In what follows, we shall identify the mappings $F_{abc}$ and
$F=u^{-1}\circ F_{abc}\circ (u,u)$. As a computation shows,
we have the following formulas:

\begin{equation}
\begin{array}
[c]{rl}
\left(
\begin{array}
[c]{cc}
a_1 & b_1\\
b_1 & c_1
\end{array}
\right) = & \left(
\begin{array}
[c]{cc}
a_0+2b_0+c_0
& -\beta \left( a_{12}-c_{12}\right) \\
-\beta \left(  a_{12}-c_{12}\right)
& -\beta \left( a_0-2b_0+c_0\right)
\end{array}
\right) ,
\smallskip\\
\left(
\begin{array}
[c]{cc}
a_2 & b_2\\
b_2 & c_2
\end{array}
\right)
= & \left(
\begin{array}
[c]{cc}
a_{12}+2b_{12}+c_{12} & a_0-c_0\\
a_0-c_0 & -\beta
\left(  a_{12}-2b_{12}+c_{12}\right)
\end{array}
\right) .
\end{array}
\label{ABC}
\end{equation}
\begin{theorem}
\label{th1}
The SBLs on $C_0(-q)$ with attached quadratic form $N$
are the maps of the form
$F_{a,c}(x,y)=a\cdot x\cdot y+c\cdot \overline{x\cdot y}$,
for some $(a,c)\in C_0(-q)\times C_0(-q)$ with $N(c)-N(a)=1$.

If $G=\{ \lambda \in C_0(-q):N(\lambda)=1\}$, then two mappings
$F_{ac}$ and $F_{a^\prime c^\prime }$ are isomorphic
if and only if there exists $\lambda \in G$ such that
$(a^\prime ,c^\prime )=(\lambda ^{-1}a,\lambda ^3c)$ or
$(a^\prime ,c^\prime )=(\lambda ^{-1}\bar{a},\lambda ^3\bar{c})$.
\end{theorem}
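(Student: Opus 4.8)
I would prove the two halves of the statement in turn, using throughout the concrete picture of $C_0(-q)$ from Section~\ref{Clifford_algebra}: it is the commutative quadratic $\mathbb{F}$-algebra with basis $\{1,v_1\cdot v_2\}$ and $(v_1\cdot v_2)\cdot(v_1\cdot v_2)=-\beta$, on which the conjugation $\sigma\colon x\mapsto\bar{x}$ restricts to the canonical involution. Three elementary facts will be used repeatedly: $\overline{x\cdot y}=\bar{x}\cdot\bar{y}$; the element $\bar{x}\cdot y+x\cdot\bar{y}$ lies in $\mathbb{F}\cdot 1$ (it is twice the polar form of $N$ evaluated at $(x,y)$); and left multiplication $L_\mu$ by $\mu\in C_0(-q)$ has determinant $N(\mu)$, while $\sigma$ has determinant $-1$ (legitimate since $p\neq 2$). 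I will also invoke the classical identification, for a non-degenerate binary quadratic form, of its special orthogonal group with the group of norm-one elements of the associated quadratic algebra acting by left multiplication; here it reads $SO(N)=\{L_\lambda:\lambda\in G\}$ and $O(N)=SO(N)\cup(SO(N)\circ\sigma)$.

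\textbf{First assertion.} By Proposition~\ref{propos1} an arbitrary SBL on $C_0(-q)$ is some $F_{abc}$. The identities above give $(F_{abc})_x=A_x+R_x$, where $A_x=L_{a\cdot x}+L_{c\cdot\bar{x}}\circ\sigma$ and $R_x\colon y\mapsto(\bar{x}\cdot y+x\cdot\bar{y})\cdot b$ has rank at most $1$. A direct $2\times 2$ computation yields $\det A_x=N(a\cdot x)-N(c\cdot\bar{x})=(N(a)-N(c))\,N(x)$, and the rank-one perturbation formula for determinants in dimension two then shows that $q_{F_{abc}}(x)=\det\bigl((F_{abc})_x\bigr)$ equals $(N(a)-N(c))\,N(x)$ plus a quadratic form in $x$ that vanishes identically if and only if $a\cdot\bar{b}=\bar{c}\cdot b$ (equivalently, one may substitute \eqref{ABC} into \eqref{f5}). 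Imposing that $q_{F_{abc}}$ agree with the prescribed attached form thus forces both $a\cdot\bar{b}=\bar{c}\cdot b$ and $N(c)-N(a)=1$. The step I expect to be least routine is to deduce $b=0$: from $a\cdot\bar{b}=\bar{c}\cdot b$ and its conjugate, using $\mu\cdot\bar{\mu}=N(\mu)\cdot 1$ and commutativity, one gets $(N(a)-N(c))\cdot b=0$ in $C_0(-q)$; since $N(a)-N(c)=-1$ is a unit, $b=0$. Hence $F_{abc}=F_{a,c}$ with $N(c)-N(a)=1$. Conversely, when $b=0$ the formula $\det A_x=(N(a)-N(c))\,N(x)$ shows at once that $F_{a,c}$ has the prescribed attached form precisely when $N(c)-N(a)=1$, which completes the first assertion.

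\textbf{Second assertion.} The decisive observation is that an isomorphism $u\in GL(C_0(-q))$ from $F_{ac}$ to $F_{a'c'}$ must be an isometry of $N$: it carries $q_{F_{ac}}$ onto $q_{F_{a'c'}}$, and by the first assertion both of these equal the prescribed form, so $u\in O(N)$. By the description of $O(N)$ recalled above, either $u=L_\lambda$ or $u=L_\lambda\circ\sigma$ for some $\lambda\in G$. Substituting each alternative into the defining relation $F_{a'c'}(x,y)=u\bigl(F_{ac}(u^{-1}x,u^{-1}y)\bigr)$, and using commutativity of $C_0(-q)$ together with $\bar{\lambda}=\lambda^{-1}$ and $\overline{\lambda^{2}}=\lambda^{-2}$ to collapse the powers of $\lambda$, I get $(a',c')=(\lambda^{-1}a,\lambda^{3}c)$ in the first case and $(a',c')=(\lambda^{-1}\bar{a},\lambda^{3}\bar{c})$ in the second. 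Conversely, for $\lambda\in G$ both $L_\lambda$ and $L_\lambda\circ\sigma$ are isometries of $N$, and running the same substitution backwards shows that they intertwine $F_{ac}$ with $F_{\lambda^{-1}a,\lambda^{3}c}$ and with $F_{\lambda^{-1}\bar{a},\lambda^{3}\bar{c}}$ respectively, giving the ``if'' direction. The main technical obstacle is the bookkeeping of the exponents of $\lambda$ as they pass through the conjugation, so as to arrive exactly at $\lambda^{-1}$ and $\lambda^{3}$ (and not at $\lambda^{\pm 2}$ or at the wrong signs), bearing in mind that $G$ is stable under $\lambda\mapsto\lambda^{-1}$. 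The idea that makes the whole argument work --- and that lets the elliptic and hyperbolic cases of $N$ be treated uniformly --- is the reduction of the problem from $GL(C_0(-q))$ to the orthogonal group of the Clifford norm.
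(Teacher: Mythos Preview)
Your proposal is correct and follows essentially the same route as the paper's proof. The paper substitutes \eqref{ABC} directly into \eqref{f5}, obtains three scalar equations, and after adding/subtracting reduces them to $N(c)-N(a)=1$ together with a $2\times 2$ linear system in $(b_0,b_{12})$ whose determinant is again $N(c)-N(a)=1$, forcing $b=0$; your decomposition $(F_{abc})_x=A_x+R_x$ and the identity $(N(a)-N(c))\,b=0$ derived from $a\bar b=\bar c\,b$ are exactly the same two facts in coordinate-free language, and your treatment of the second assertion (reduce to $O(N)=\{L_\lambda,\,L_\lambda\circ\sigma:\lambda\in G\}$ and substitute) is identical to the paper's. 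The only point to tighten is the sentence ``thus forces both $a\bar b=\bar c\,b$ and $N(c)-N(a)=1$'': one must note that the extra term $E(x)$ can be a scalar multiple of $N$ only if it vanishes (its diagonal coefficients are $\pm 2e_3$), so matching with $N$ really does decouple the two conditions --- this is implicit in your parenthetical reference to \eqref{ABC}--\eqref{f5}, but deserves one line.
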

\begin{proof}
According to \eqref{ABC} we have
\[
\begin{array}
[c]{lll}
a_1=a_0+2b_0+c_0, & b_1
=-\beta \left( a_{12}-c_{12}\right) ,
& c_1=-\beta \left( a_0-2b_0+c_0
\right) ,\\
a_2=a_{12}+2b_{12}+c_{12}, & b_2=a_0-c_0, & c_2
=-\beta \left(
a_{12}-2b_{12}+c_{12}
\right) .
\end{array}
\]
By replacing these formulas into \eqref{f5}, it follows:
\[
\begin{array}
[c]{rl}
eq_1\equiv & a_1b_2-a_2b_1\\
= & (a_0)^2+\beta (a_{12})^2+2a_0b_0+2\beta a_{12}b_{12}-2b_0
c_0-2\beta b_{12}c_{12}\\
& -(c_0)^2-\beta(c_{12})^2,\medskip\\
eq_2\equiv & \tfrac{1}{2}(a_1c_2-a_2c_1)\\
= & 2\beta a_0b_{12}-2\beta a_{12}b_0+2\beta b_{12}c_0
-2\beta b_0c_{12},
\medskip\\
eq_3\equiv & b_1c_2-b_2c_1\\
= & \beta(a_0)^2+\beta^2(a_{12})^2-2\beta a_0b_0
-2\beta ^2a_{12}b_{12}+2\beta^2b_{12}c_{12}+2\beta b_0c_0\\
& -\beta (c_0)^2-\beta ^2(c_{12})^2.
\end{array}
\]
Hence
\begin{equation}
eq_1=-1,\quad eq_2=0,\quad eq_3=-\beta .
\label{system0}
\end{equation}
Dividing $eq_2=0$ by $2\beta $ and $eq_3=-\beta $ by $\beta $ we obtain
{\small
\[
\begin{array}
[c]{r}
(a_0)^2+\beta (a_{12})^2+2a_0b_0+2\beta a_{12}b_{12}
-2b_0c_0-2\beta b_{12}c_{12}-(c_0)^2-\beta (c_{12})^2=-1,\\
a_0b_{12}-a_{12}b_0+b_{12}c_0-b_0c_{12}=0,\\
(a_0)^2+\beta(a_{12})^2-2a_0b_0-2\beta a_{12}b_{12}
+2\beta b_{12}c_{12}+2b_0c_0-(c_0)^2-\beta (c_{12})^2=-1.
\end{array}
\]
}
By adding and subtracting the first and third equations above
and dividing the result by $2$,
\begin{align*}
a_0^2+\beta a_{12}^2-c_0^2-\beta c_{12}^2+1 & =0,\\
a_0b_0-b_0c_0+\beta a_{12}b_{12}-\beta b_{12}c_{12} & =0.
\end{align*}
Accordingly, the system \eqref{system0} is equivalent to
\[
\begin{array}
[c]{rl}
e_1\equiv & (a_0)^2+\beta (a_{12})^2-(c_0)^2-\beta (c_{12}
)^2+1=0,\\
e_2\equiv & a_0b_{12}-a_{12}b_0+b_{12}c_0-b_0c_{12}=0,\\
e_3\equiv & a_0b_0-b_0c_0+\beta a_{12}b_{12}-\beta b_{12}c_{12}=0,
\end{array}
\]
which we use in what follows, because it is easier than
the first one. The equation $e_1$ can equivalently be written as
\begin{equation}
N(c)-N(a)=1. \label{N_c_N_a}
\end{equation}
Furthermore, the equations $e_2=e_3=0$ are linear in $b_0$ and $b_{12}$
and they can be written in matrix notation as
\begin{equation}
\left(
\begin{array}
[c]{cc}
-a_{12}-c_{12} & a_0+c_0 \\
a_0-c_0 & \beta (a_{12}-c_{12})
\end{array}
\right) \left(
\begin{array}
[c]{c}
b_0\\
b_{12}
\end{array}
\right) =\left(
\begin{array}
[c]{c}
0\\
0
\end{array}
\right) .
\label{system1}
\end{equation}
The determinant of the matrix of the system \eqref{system1}
is equal to
\[
e_4\equiv(c_0)^2+\beta(c_{12})^2-(a_0)^2-\beta (a_{12})^2
=N(c)-N(a)=1,
\]
by virtue of \eqref{N_c_N_a}; hence $e_4$ cannot vanish.
Therefore, $b_0=b_{12}=0$.

Finally, let us determine the conditions under which $F_{a,c}$
and $F_{a^\prime ,c^\prime }$ are isomorphic. As is known,
any isomorphism between them must
be an isometry of $(C_0(-q),N)$, and these isometries
are the group $\mathcal{G}$ of the maps that have one
of the following forms:
\begin{equation}
\left.
\begin{array}
[c]{rl}
\text{(i)} & x\mapsto \lambda x,\\
\text{(ii)} & x\mapsto \lambda\bar{x},
\end{array}
\right\}  \quad\forall x\in C_0(-q),\forall\;\lambda \in G.
\label{group}
\end{equation}
Letting
$x^\prime =\lambda x$, $y^\prime =\lambda y$, $z^\prime =\lambda z$
into the equation
\begin{equation}
z^\prime =F_{a^\prime c^\prime }(x^\prime ,y^\prime )
=a^\prime x^\prime y^\prime +c^\prime \bar{x}^\prime \bar{y}^\prime ,
\label{Fprime}
\end{equation}
we obtain $\lambda z=a^\prime (\lambda x)(\lambda y)
+c^\prime (\bar{\lambda }\bar{x})(\bar{\lambda}\bar{y})$; hence
$z=\lambda a^\prime xy+\bar{\lambda }^3c^\prime \bar{x}\bar{y}$, as
$\lambda ^{-1}=\bar{\lambda }$ (because $N(\lambda )=\lambda \bar{\lambda }=1$)
and comparing it with the original equation, i.e.,
$z=F_{ac}(x,y)=axy+c\bar{x}\bar{y}$, we deduce $a^\prime =\lambda ^{-1}a$,
$c^\prime =\lambda ^3c$. Similarly, letting
$x^\prime =\lambda\bar{x}$, $y^\prime =\lambda \bar{y}$,
$z^\prime =\lambda \bar{z}$ into \eqref{Fprime}, we obtain
$\lambda \bar{z}=a^\prime (\lambda\bar{x})(\lambda\bar{y})
+c^\prime (\bar{\lambda}x)(\bar{\lambda}y)$; hence
$\bar{z}=\lambda a^\prime \bar{x}\bar{y}+\bar{\lambda }^3c^\prime xy$
and conjugating, $z=\bar{\lambda }\bar{a}^\prime xy
+\lambda ^3\bar{c}^\prime \bar{x}\bar{y}$. Therefore it follows:
$a=\bar{\lambda }\bar{a}^\prime $,
$c=\lambda ^3\bar{c}^\prime $, or equivalently, $a^\prime =\lambda ^{-1}
\bar{a}$, $c^\prime =\lambda ^3\bar{c}$, thus concluding the proof.
\end{proof}

\section{Isotropy}
Next, we discuss the index of $q$. The quadratic form $q$ is said to be
\emph{hyperbolic} if $q$ admits an isotropic vector $v_1\neq 0$. If $q$ does
not admit any non-zero isotropic vector, then $q$ is said to be
\emph{elliptic}; in this case, as we have seen above, there exists a basis
$(v_1,v_2)$ for $V$ such that, $q(x)=-(x_1)^2-\beta(x_2)^2$, where
$-\beta\notin\mathbb{F}^{\ast2}$.

If the discriminant of $q$ is different from $1\operatorname{mod}
\mathbb{F}^{\ast 2}$, then $q$ is elliptic, and if the discriminant of $q$ is
equal to $1\operatorname{mod}\mathbb{F}^{\ast 2}$, then $q$ is hyperbolic.

With the same notations as in the section \ref{Clifford_algebra}, we have
$(v_1\cdot v_2)^2+\beta =0$. The $\mathbb{F}$-algebra $C_0(-q)$ being
quadratic, we deduce $C_0(-q)\cong\mathbb{F}[t]/(t^2+\beta )$ (e.g., see
\cite[Example 98.2]{EKM}). Hence, in the hyperbolic case,
$C_0(-q)\cong \mathbb{F}\times \mathbb{F}$, and in the elliptic case
$C_0(-q)$ is a quadratic field extension of the ground field $\mathbb{F}$.

\begin{lemma}
\label{lemma3}
If $q$ is hyperbolic, then the set of zero divisors in
$C_0(-q)$ coincides with the set of elements of norm zero.
\end{lemma}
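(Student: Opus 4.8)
The plan is to use the explicit identification $C_0(-q)\cong\mathbb{F}[t]/(t^2+\beta)$ and the fact that, in the hyperbolic case, the discriminant condition forces $-\beta\in\mathbb{F}^{\ast2}$, so that $t^2+\beta$ factors over $\mathbb{F}$. First I would write $-\beta=\mu^2$ for some $\mu\in\mathbb{F}^\ast$; then $t^2+\beta=t^2-\mu^2=(t-\mu)(t+\mu)$, and the Chinese Remainder Theorem gives an explicit $\mathbb{F}$-algebra isomorphism $\varphi\colon C_0(-q)\overset{\simeq}{\longrightarrow}\mathbb{F}\times\mathbb{F}$ sending the class of $t$ (that is, $v_1\cdot v_2$) to $(\mu,-\mu)$. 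This already reproves what the excerpt states right before the lemma, and it gives me coordinates in which both ``zero divisor'' and ``norm zero'' become transparent.

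Next I would transport the Clifford norm $N$ through $\varphi$. Writing $x=x_0+x_{12}(v_1\cdot v_2)$, the formula recorded in the excerpt gives $N(x)=x\cdot\bar x=(x_0)^2+\beta(x_{12})^2=(x_0)^2-\mu^2(x_{12})^2=(x_0-\mu x_{12})(x_0+\mu x_{12})$. Under $\varphi$ these two factors are exactly the two coordinates of $\varphi(x)\in\mathbb{F}\times\mathbb{F}$, so $N$ corresponds to the product map $(s,t)\mapsto st$ on $\mathbb{F}\times\mathbb{F}$. Hence $N(x)=0$ if and only if at least one coordinate of $\varphi(x)$ vanishes.

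Finally I would describe the zero divisors of $\mathbb{F}\times\mathbb{F}$: an element $(s,t)$ is a zero divisor precisely when $st=0$ but $(s,t)\neq(0,0)$; if one insists on counting $0$ itself among the ``elements of norm zero'' (as the statement's phrasing does, since $N(0)=0$) then the two sets literally coincide as the set $\{(s,t):st=0\}$. Either way, transporting back through $\varphi$, the set of zero divisors in $C_0(-q)$ is exactly $\{x:N(x)=0\}$, which is the assertion. I do not expect a genuine obstacle here; the only point requiring a little care is the bookkeeping on whether $0$ is included among the ``zero divisors'', and matching that convention to the one implicit in the statement — the substantive content, namely the factorization $N(x)=(x_0-\mu x_{12})(x_0+\mu x_{12})$ mirroring the CRT splitting, is immediate once $-\beta$ is recognized as a square.
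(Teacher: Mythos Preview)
Your argument is correct, but it is not the route the paper takes. The paper gives a two-line linear-algebra proof that avoids the CRT splitting entirely: for the forward direction it simply observes that $N(x)=x\bar{x}=0$ exhibits $\bar{x}$ as a witness that $x$ is a zero divisor; for the converse it writes the equation $x\cdot y=0$ in the basis $\{1,v_1\cdot v_2\}$ as the homogeneous linear system
\[
\begin{pmatrix} x_0 & -\beta x_{12}\\ x_{12} & x_0 \end{pmatrix}
\begin{pmatrix} y_0\\ y_{12}\end{pmatrix}=0,
\]
notes that the determinant of the coefficient matrix is exactly $N(x)$, and concludes $N(x)=0$ from the existence of a nonzero solution $y$. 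Your approach instead invokes the isomorphism $C_0(-q)\cong\mathbb{F}\times\mathbb{F}$ (already recorded just above the lemma) and reads off both the norm and the zero-divisor structure coordinatewise. What you gain is a more structural explanation that makes the hyperbolic hypothesis visibly do the work; what the paper's computation gains is that it is self-contained and in fact never uses the hyperbolic assumption---the same determinant identity $\det(\text{left multiplication by }x)=N(x)$ proves the statement uniformly, the elliptic case being vacuous. Your remark about the convention on whether $0$ counts as a zero divisor is apt; the paper is silent on this point and its forward implication tacitly assumes $x\neq 0$ so that $\bar{x}\neq 0$ as well.
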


\begin{proof}
If $N(x)=x\cdot \bar{x}=0$, then $x$ is a zero divisor obviously. Conversely,
if $x,y\in C_0(-q)$ are such that $x\neq 0$, $y\neq 0$, and $x\cdot y=0$, then
we obtain the following homogeneous linear system:
\begin{equation}
\left(
\begin{array}
[c]{cc}
x_0 & -\beta x_{12}\\
x_{12} & x_0
\end{array}
\right) \left(
\begin{array}
[c]{c}
y_{0}\\
y_{12}
\end{array}
\right) =\left(
\begin{array}
[c]{c}
0\\
0
\end{array}
\right) . \label{system2}
\end{equation}
Since
\[
\det \left(
\begin{array}
[c]{cc}
x_{0} & -\beta x_{12}\\
x_{12} & x_0
\end{array}
\right) =N(x),\quad y\neq 0,
\]
we can conclude the statement.
\end{proof}

Below we compute the isotropy subgroup $\mathcal{G}(F_{ac})\subset \mathcal{G}$
of the mapping $F_{ac}$ in Theorem \ref{th1}.

We denote by $\phi _\lambda (x)=\lambda x$, $\psi _\lambda (x)=\lambda \bar{x}$,
$\lambda \in G$, $x\in C_0(-q)$, the transformations (i) and (ii)
respectively in the formula \eqref{group}. As a computation shows, we obtain
\[
\begin{array}
[c]{cccc}
\phi _\lambda \circ \phi _\mu =\phi _{\lambda \mu },
& \phi _\lambda \circ \psi _\mu =\psi _{\lambda \mu},
& \psi_{\lambda}\circ\psi_{\mu}=\phi _{\lambda \bar{\mu}},
& \forall \lambda ,\mu \in G.
\end{array}
\]
In particular
$\psi _\lambda\circ \psi _\lambda =\phi _{\lambda \bar{\lambda }}
=\operatorname*{id}$, $\forall \lambda \in G$; i.e.,
every transformation in (ii) is involutive.

\begin{proposition}
\label{propos2}
With the previous notations, we have

If $N(a)\neq 0$ and $ca^3\notin\mathbb{F}$, then
$\mathcal{G}(F_{ac})=\{ \operatorname*{id}\} $.

If $N(a)\neq 0$ and $ca^3\in \mathbb{F}$, then
$\mathcal{G}(F_{ac})=\left\{
\operatorname*{id},\psi _{\frac{\bar{a}}{a}}\right\} $.

If $N(a)=0$, then
$\mathcal{G}(F_{ac})
=\left\{ \phi _\lambda ,\psi _\mu :\lambda ^3=1,\mu ^3=c^2\right\} $.
\end{proposition}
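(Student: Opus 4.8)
The plan is to compute $\mathcal{G}(F_{ac})$ directly from Theorem \ref{th1}: an element of $\mathcal{G}$ fixes $F_{ac}$ precisely when, in the notation of that theorem, $(a,c)=(\lambda^{-1}a,\lambda^3 c)$ for some $\lambda\in G$ (the $\phi_\lambda$ case), or $(a,c)=(\lambda^{-1}\bar a,\lambda^3\bar c)$ for some $\lambda\in G$ (the $\psi_\lambda$ case). So the whole problem reduces to solving two systems of equations in $\lambda\in G$:
\[
\text{(I)}\quad \lambda a=a,\ \lambda^3 c=c;\qquad
\text{(II)}\quad \lambda \bar a= a,\ \lambda^3 c=\bar c.
\]
First I would dispose of case (I). If $N(a)\neq 0$ then $a$ is invertible in $C_0(-q)$, so $\lambda a=a$ forces $\lambda=1$; thus $\phi_\lambda\in\mathcal{G}(F_{ac})$ only for $\lambda=1$, i.e.\ $\operatorname{id}$. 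If $N(a)=0$ (possible only in the hyperbolic case by Lemma \ref{lemma3}), then $\lambda a=a$ becomes a linear condition on $\lambda$; writing $C_0(-q)\cong\mathbb{F}\times\mathbb{F}$ and $a=(a^{(1)},a^{(2)})$ with exactly one coordinate zero, one finds $\lambda a=a$ amounts to $\lambda=1$ in the nonzero slot, leaving the other coordinate of $\lambda$ free subject to $N(\lambda)=1$; combined with $\lambda^3 c=c$ this pins down $\lambda^3=1$ in that free slot. This yields the condition $\lambda^3=1$ for the $\phi$-part in the $N(a)=0$ case.

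Next, case (II), the $\psi_\lambda$ transformations. Again split on $N(a)$. If $N(a)\neq 0$, from $\lambda\bar a=a$ we get $\lambda=a\bar a^{-1}=a/\bar a$ (uniquely determined), and one checks $N(\lambda)=N(a)/N(\bar a)=1$ so indeed $\lambda=\bar a/a$ lies in $G$ — here I must be careful with which of $\bar a/a$ or $a/\bar a$ appears; the statement says $\psi_{\bar a/a}$, matching $\lambda\bar a=a\Rightarrow \lambda=a\bar a/(a\bar a)\cdot(\bar a/a)$, so I will recompute to get the normalization right. With $\lambda$ thus fixed, the second equation $\lambda^3 c=\bar c$ becomes a genuine constraint on $(a,c)$: substituting $\lambda=\bar a/a$ gives $\bar a^3 c=a^3\bar c$, equivalently $\overline{ca^3}=ca^3$, i.e.\ $ca^3\in\mathbb{F}$. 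So $\psi_{\bar a/a}\in\mathcal{G}(F_{ac})$ iff $ca^3\in\mathbb{F}$, which gives the dichotomy between the first two bullets. If instead $N(a)=0$, then $\lambda\bar a=a$ is a linear system; reasoning as before in $\mathbb{F}\times\mathbb{F}$ one shows it is solvable and the solution set inside $G$ is an $S^3$-torsor worth of $\lambda$; imposing $\lambda^3 c=\bar c$ then reduces to $\mu^3=c^2$ after relabeling (using $N(\mu)=\mu\bar\mu=1$ to turn $\bar c$ into $c^2/N(c)$-type expressions), giving the third bullet.

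The main obstacle I anticipate is the $N(a)=0$ (hyperbolic) case: there $a$ and $\bar a$ are noninvertible, so the equations $\lambda a=a$ and $\lambda\bar a=a$ are not solved by simple division and one must work coordinatewise in $C_0(-q)\cong\mathbb{F}\times\mathbb{F}$, keeping track of how conjugation swaps the two factors and how the norm-one condition couples the coordinates; getting the precise form $\{\phi_\lambda,\psi_\mu:\lambda^3=1,\mu^3=c^2\}$ — in particular the exponent $3$ and the appearance of $c^2$ rather than $c$ or $\bar c$ — will require a careful bookkeeping of these identifications, together with the composition rules $\phi_\lambda\phi_\mu=\phi_{\lambda\mu}$, $\psi_\lambda\psi_\mu=\phi_{\lambda\bar\mu}$ recorded just before the proposition to confirm the stated set is actually a subgroup. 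Everything else is routine algebra in $C_0(-q)$ once the two systems (I) and (II) are written down.
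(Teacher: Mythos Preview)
Your overall strategy matches the paper's: reduce to the two systems coming from Theorem~\ref{th1}, split on whether $N(a)$ vanishes, and in the case $N(a)\neq 0$ derive the criterion $ca^3\in\mathbb{F}$ for a type-(ii) transformation to lie in the isotropy. The normalization issue you flag is only a convention: the paper writes the type-(ii) condition as $\lambda a=\bar a$, $c=\lambda^3\bar c$ (not $\lambda\bar a=a$, $\lambda^3 c=\bar c$), which immediately gives $\lambda=\bar a/a$; your system (II) differs from this by $\lambda\mapsto\bar\lambda$, which is why you are getting $a/\bar a$ instead.

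The genuine gap is in your treatment of $N(a)=0$. You assume $a$ has exactly one nonzero coordinate in $C_0(-q)\cong\mathbb{F}\times\mathbb{F}$ and then claim that $\lambda a=a$ fixes $\lambda$ to $1$ in that slot while leaving the other slot ``free subject to $N(\lambda)=1$''. But $N(\lambda)=\lambda_1\lambda_2=1$ determines the second coordinate once the first equals $1$, so for $a\neq 0$ you actually get $\lambda=1$, not a family with $\lambda^3=1$. The paper avoids this pitfall by a short intrinsic argument you are missing: from the type-(ii) equation $\lambda a=\bar a$ one multiplies by $a$ to obtain $\lambda a^2=a\bar a=N(a)=0$; since $\lambda$ is a unit and $C_0(-q)$ has no nonzero nilpotents, this forces $a=0$. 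With $a=0$ the two systems collapse to $c=\lambda^3 c$ and $c=\lambda^3\bar c$; the first gives $\lambda^3=1$ (as $N(c)=1$ makes $c$ invertible), and multiplying the second by $c$ gives $c^2=\lambda^3 N(c)=\lambda^3$, i.e.\ $\mu^3=c^2$, with no coordinate bookkeeping required.
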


\begin{proof}
If one of the transformations (i) or (ii) in the formula \eqref{group} belongs
to $\mathcal{G}(F_{ac})$, then either (i) $\lambda a=a$, $c=\lambda ^3c$, or
(ii) $\lambda a=\bar{a}$, $c=\lambda ^3\bar{c}$. We distinguish several cases.

Assume the item (i) holds.

\begin{enumerate}
\item If $q$ is elliptic, then $a\neq 0$ implies $\lambda =1$, as $C_0(-q)$ is
a field, and $a=0$ implies $\lambda ^3=1$, because in this case $c$ is
invertible, as follows from \eqref{N_c_N_a}.

\item If $q$ is hyperbolic, i.e., $\beta =-\gamma ^2$,
$\gamma\in \mathbb{F}^\ast $, then by applying Lemma \ref{lemma3} to the equation
$(\lambda -1)a=0$ it follows that either $\lambda =1$ or $N(a)=0$. In the second
case $c$ is invertible in $C_0(-q)$ by virtue of \eqref{N_c_N_a}; hence
$\lambda ^3=1$. If $N(a)\neq 0$, then the equation $(\lambda -1)a=0$ implies
$\lambda =1$.
\end{enumerate}

If $q$ is elliptic, then $N(a)=0$ if and only if $a=0$. Therefore, we can
group the two previous items saying that the transformations of type (i) in
\eqref{N_c_N_a} that belong to $\mathcal{G}(F_{ac})$ are as follows: If
$N(a)\neq 0$, then such transformations reduce to the identity map, and if
$N(a)=0$, then they correspond to the values $\lambda \in G$ such that
$\lambda ^3=1$.

Assume the item (ii) holds.

From $\lambda a=\bar{a}$ it follows $\lambda a^2=N(a)$.

If $N(a)=0$, then $a=0$, as $\lambda $ is invertible and $C_0(-q)$\ has no
nilpotent element. In this case $N(c)=1$ and from $c=\lambda^3\bar{c}$ it
follows $\lambda ^3=c^2$.

\begin{itemize}
\item If $q$ is elliptic, the equation $\lambda ^3=c^2$ may admit none (if
$c^2$ is not a cube in $C_0(-q)$), one (if $c^2$ is a cube in
$C_0(-q)$ and $-3$ is not a square in $C_0(-q)$) or three solutions in
$C_0(-q)$ (if $c^2$ is a cube in $C_0(-q)$ and $-3$ is a square in
$C_0(-q)$).

\item If $q$ is hyperbolic, then by considering the isomorphism
\begin{equation}
\begin{array}
[c]{l}
\phi \colon\mathbb{F}[t]/(t^2-\gamma ^2)
\to \mathbb{F}\times \mathbb{F},\\
\phi (u+v\tau)=(u-v\gamma ,u+v\gamma ),\\
u,v\in \mathbb{F},\;\tau =t\operatorname{mod}(t^2-\gamma ^2),
\end{array}
\label{iso}
\end{equation}
and by writing $\phi (w)=(w_1,w_2)$, it follows that the equation
$\lambda ^3=c^2$ is equivalent to the pair of equations
$(\lambda _1)^3=(c_1)^2$, $(\lambda _2)^3=(c_2)^2$ in $\mathbb{F}$. As
$N(c)=N(\lambda )=1$, we have $\lambda _1\lambda _2=c_1c_2=1$, and the
equation $(\lambda _2)^3=(c_2)^2$ is equivalent to
$(\lambda _1)^3=(c_1)^2$. Hence even in the hyperbolic case
the number of solutions to $\lambda ^3=c^2$ may be $0$, $1$ or $3$.
\end{itemize}

If $N(a)\neq 0$, then $\lambda =\frac{\bar{a}}{a}$ and replacing this value into
the second equation in (ii) we obtain $a^3c=\overline{a^3c}$, or
equivalently $a^3c\in \mathbb{F}$.

In summary, the transformations of type (ii) in \eqref{N_c_N_a} that belong to
$\mathcal{G}(F_{ac})$ are as follows:

\begin{itemize}
\item If $N(a)=0$, then such transformations correspond to the values
$\lambda\in G$ such that $\lambda^3=c^2$, whether $q$ is elliptic or hyperbolic.

\item If $N(a)\neq 0$, then such transformations do not exist, except when
$ca^3\in \mathbb{F}$, in which case the only transformation of type (ii) in
$\mathcal{G}(F_{ac})$ corresponds to $\lambda =\frac{\bar{a}}{a}$.
\end{itemize}

Accordingly, we have

\[
\text{(i)}\left\{
\begin{array}
[c]{ll}
N(a)\neq 0, & \{\operatorname*{id}\}\\
N(a)=0, & \{\lambda \in G:\lambda ^3=1\}
\end{array}
\right. \quad\text{(ii)}\left\{
\begin{array}
[c]{l}
N(a)\neq 0\left\{
\begin{array}
[c]{lc}
\emptyset, & \text{if }ca^3\notin \mathbb{F}\\
\lambda=\frac{\bar{a}}{a}, & \text{if }ca^3\in\mathbb{F}
\end{array}
\right. \\
\begin{array}
[c]{cc}
N(a)=0, & \{ \lambda \in G:\lambda ^3=c^2\}
\end{array}
\end{array}
\right.
\]
Putting together transformations of type (i) and type (ii), the statement follows.
\end{proof}

\section{The role of the invariants}
Let $\sigma\colon V^\ast \to S^2V^\ast \otimes V$ be the map
defined by,
\begin{equation}
\sigma(v^\ast )(x,y)
=\tfrac{1}{3}\left(  v^\ast (x)y+v^\ast (y)x\right) ,
\quad x,y\in V,\;v^\ast \in V^\ast . \label{f2}
\end{equation}
By using formula \eqref{f0}, the homomorphism $\sigma $ is proved to be a
$GL(V)$-equivariant section of $\mathrm{tr}$. If
$v^\ast =\lambda _1v_1^\ast +\lambda _2v_2^\ast$, then from \eqref{f2} it follows:
\begin{equation}
\begin{array}
[c]{l}
\sigma (v^\ast )(v_1,v_1)=\tfrac{2}{3}\lambda _1v_1,\smallskip \\
\sigma (v^\ast )(v_2,v_2)=\tfrac{2}{3}\lambda _2v_2,\smallskip \\
\sigma (v^\ast )(v_1,v_2)=\tfrac{1}{3}(\lambda _1v_2+\lambda _2v_1).
\end{array}
\label{sigma}
\end{equation}
Therefore, there is a decomposition of $GL(V)$-modules $S^2V^\ast \otimes
V=W\oplus\sigma(V^\ast )$, where $W=\left\{  F\in S^2V^\ast \otimes
V:\mathrm{tr}F=0\right\}  $.

For every $F\in S^2V^\ast \otimes V$ we set $\bar{F}=F-\sigma
(\mathrm{tr}F)$. Then, $F$ is said to be \emph{regular} if the quadratic form
$Q_{\bar{F}}$ is non-degenerate.

A simple computation proves that $F$ is regular if and only if the following
condition holds:

\begin{align}
\det Q_{\bar{F}}  &  =\tfrac{4}{27}a_1b_1b_2c_2 -\tfrac{1}{3}
a_1a_2b_1c_1+\tfrac{2}{3}a_2b_1b_2c_1 +\tfrac{1}{6}a_1
a_2c_1c_2\label{f7}\\
&  -\tfrac{1}{3}a_2b_2c_1c_2-\tfrac{1}{27}(a_1)^3c_1
+\tfrac{1}{27}(a_1)^2(b_1)^2+\tfrac{1}{108}(a_1)^2(c_2)^2\nonumber \\
&  +\tfrac{8}{27}(b_2)^3c_1+\tfrac{4}{27}(b_1)^2(b_2)^2
+\tfrac{1}{27}(b_2)^2(c_2)^2+\tfrac{8}{27}a_2(b_1)^3\nonumber \\
&  -\tfrac{1}{27}a_2(c_2)^3-\tfrac{4}{27}b_1(b_2)^2c_2
-\tfrac{1}{27}(a_1)^2b_1c_2-\tfrac{4}{9}a(b_2)^2c_1\nonumber \\
&  +\tfrac{2}{9}(a_1)^2b_2c_1+\tfrac{2}{9}a_2b_1(c_2)^2
-\tfrac{4}{9}a_2(b_1)^2c_2-\tfrac{1}{27}a_1b_2(c_2
)^2\nonumber\\
&  -\tfrac{4}{27}a_1(b_1)^2b_2-\tfrac{1}{4}(a_2)^2(c_1)^2\nonumber \\
&  \neq 0.\nonumber
\end{align}
From the very definition it follows that the set of regular bilinear symmetric
maps is an open subset $R\subset S^2V^\ast \otimes V$ in the Zariski
topology; precisely, the set where the quartic form \eqref{f7} does not vanish.

If the ground field $\mathbb{F}$ is algebraically closed, then in
\cite[Theorem 4--2]{DMP} it is proved that two regular elements $F,G\in
R\subset S^2V^\ast \otimes V$ are $GL(V)$-equivalent, if and only if
$\mathcal{I}_i(F)=\mathcal{I}_i(G)$, $i=1,2$, where
$\mathcal{I}_1,\mathcal{I}_2\colon R\to \mathbb{F}$ are the $GL(V)$-invariant
functions defined in \cite[Theorem 4--1]{DMP} and computed in
\cite[pp.\ 11--12]{DMP}), namely,
\[
\begin{array}
[c]{rl}
\mathcal{I}_1(F)= & \tfrac{1}{12\det Q_{\bar{F}}}
\left[  (a_1+b_2)^2\left(  (2b_1-c_2)^2+3(2b_2-a_1)c_1\right) \right. \\
& \multicolumn{1}{r}{+(a_1+b_2)(b_1+c_2)((2b_2-a_1)(2b_1-c_2)-9a_2c_1)}\\
& \multicolumn{1}{r}{\left.  +(b_1+c_2)^2((2b_2-a_1)^2+3(2b_1-c_2)a_2)\right] ,}
\end{array}
\]
\[
\begin{array}
[c]{rl}
\mathcal{I}_2(F)= & \tfrac{1}{4\det Q_{\bar{F}}}
\left[  -c_1(a_1+b_2)^3+(a_1+b_2)^2(b_1+c_2)(2b_1-c_2)\right. \\
& \multicolumn{1}{r}{\left. +(a_1+b_2)(b_1+c_2)^2(2b_2-a_1)-a_2(b_1+c_2)^3\right] ,}
\end{array}
\]
where $\det Q_{\bar{F}}$ is as in \eqref{f7}. Next, we discuss the role of
these invariants in the classification problem.

The isomorphism $u\colon V\to C_0(-q)$, $u(x)=v_1\cdot x$, $\forall x\in V$,
defined in the section \ref{Clifford_algebra}, induces an isomorphism
\[
S^2(u^{\ast-1})\otimes u\colon S^2(V^\ast ) \otimes V\to S^2
(C_0(-q)^\ast )\otimes C_0(-q)
\]
which allows one to transport the invariants $\mathcal{I}_1$ and
$\mathcal{I}_2$ into a Zariski open subset $R^\prime $ in $S^2
(C_0(-q)^\ast )\otimes C_0(-q)$. Moreover, by applying Theorem \ref{th1},
we can confine ourselves to compute these new invariants $\mathcal{J}_1$ and
$\mathcal{J}_2$ only on the maps $F_{ac}$ fulfilling the equation
\eqref{N_c_N_a}. This is accomplished by using the formulas \eqref{ABC} for
$b_0=b_{12}=0$, thus obtaining the following expressions:

\begin{equation}
\begin{array}
[c]{rrl}
\text{\lbrack1]} & \mathcal{J}_1\left(  F_{ac}\right)
= & 27\frac{K(a,c)+2N(a)^2+3N(a)}{4K(a,c)+8N(a)^2+36N(a)+27},
\medskip\\
\text{\lbrack2]} & \mathcal{J}_2\left(  F_{ac}\right)
= & 27\frac{K(a,c)+2N(a)^2}{4K(a,c)+8N(a)^2+36N(a)+27},
\medskip\\
& K(a,c)= & a^3c+\bar{a}^3\bar{c},
\end{array}
\label{invariants}
\end{equation}

\begin{proposition}
\label{propos2.5}
Let $F_{ac}$, $F_{a^\prime c^\prime }$ be two symmetric
bilinear maps in Zariski open subset $R^\prime $ defined above.

If the pairs $(a,c)$ and $(a^\prime ,c^\prime )$ are related by one of the
two formulas in the second part of \emph{Theorem \ref{th1}}, then
$\mathcal{J}_i\left( F_{ac}\right)
=\mathcal{J}_i\left(  F_{a^\prime c^\prime }\right)  $, $i=1,2$.
Therefore, the functions $\mathcal{J}_1$ and $\mathcal{J}_2$
are invariant under the action of the group of
transformations $\mathcal{G}$ given in \emph{\eqref{group}}.

Moreover, the equations $\mathcal{J}_i\left( F_{ac}\right)
=\mathcal{J}_i\left(  F_{a^\prime c^\prime }\right)  $, $i=1,2$,
hold if and only if the following two conditions are fulfilled:
\begin{equation}
\begin{array}
[c]{lllll}
\text{\emph{(i)}}
& N(a^\prime )=N(a),
&  &
\text{\emph{(ii)}} & K(a^\prime ,c^\prime )=K(a,c).
\end{array}
\label{conditions}
\end{equation}

In addition, we have the following mutually excluding cases:

\begin{enumerate}
\item[$(1)$] If $a=0$ and \emph{(i)} and \emph{(ii)} in
\emph{\eqref{conditions}} hold, then $a^\prime =0$, and the symmetric
bilinear maps $F_{ac}$ and $F_{a^\prime c^\prime }$ are
$\mathcal{G}$-equivalent if and only if $\frac{c^\prime }{c}$
or $\frac{c^\prime }{\bar{c}}$\ is a cube in $C_0(-q)$.

\item[$(2)$] If $c=0$, and \emph{(i)} and \emph{(ii)} in
\emph{\eqref{conditions}} hold, then $c^\prime =0$, and the symmetric
bilinear maps $F_{ac}$ and $F_{a^\prime c^\prime }$ are equivalent under the
subgroup $\mathcal{G}^{0}\subset\mathcal{G}$ of the transformations of type
\emph{(i)} in \emph{\eqref{group}}.

\item[$(3)$] If $a\neq 0$ and $c\neq 0$, then the formulas \emph{(i)} and
\emph{(ii)} in \emph{\eqref{conditions}} hold if and only if the symmetric
bilinear maps $F_{ac}$ and $F_{a^\prime c^\prime }$ are $\mathcal{G}$-equivalent.
\end{enumerate}
\end{proposition}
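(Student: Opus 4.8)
The plan is to establish the three assertions in turn, using Theorem~\ref{th1} to translate $\mathcal G$-equivalence of $F_{ac}$, $F_{a'c'}$ into the existence of $\lambda\in G$ with $(a',c')=(\lambda^{-1}a,\lambda^3c)$ or $(\lambda^{-1}\bar a,\lambda^3\bar c)$, and exploiting throughout that $C_0(-q)$ is commutative, that $N$ is multiplicative with $N(\bar x)=N(x)$ and $N(\lambda)=1$ for $\lambda\in G$, and that by \eqref{N_c_N_a} one always has $N(c)-N(a)=1$, so condition~(i) also gives $N(c')=N(c)$.

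For the invariance claim I would just substitute the two transformation rules into $N(a)$ and into $K(a,c)=a^3c+\bar a^3\bar c$, the only quantities on which $\mathcal J_1,\mathcal J_2$ depend by \eqref{invariants}: one gets $N(\lambda^{-1}a)=N(\lambda^{-1}\bar a)=N(a)$, while $(\lambda^{-1}a)^3(\lambda^3c)=a^3c$ and $(\lambda^{-1}\bar a)^3(\lambda^3\bar c)=\bar a^3\bar c=\overline{a^3c}$, so that $K$ is unchanged (in the second case because $\overline{a^3c}+a^3c=K(a,c)$). For the equivalence ``$\mathcal J_i$ equal $\iff$ (i) and (ii)'' the direction $\Leftarrow$ is immediate from \eqref{invariants}; for $\Rightarrow$, writing $N=N(a)$, $K=K(a,c)$ and $D=4K+8N^2+36N+27$ (nonzero on $R'$, being the denominator in \eqref{invariants}), one has $\mathcal J_2=27\,\tfrac{K+2N^2}{D}$, $\mathcal J_1-\mathcal J_2=81\,\tfrac{N}{D}$, and, dividing the identity $D=4(K+2N^2)+36N+27$ by $D$, also $\tfrac{27}{D}=1-\tfrac{4\mathcal J_2}{27}-\tfrac{4(\mathcal J_1-\mathcal J_2)}{9}$. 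Thus $\tfrac{27}{D}$, $\tfrac{N}{D}$ and $\tfrac{K+2N^2}{D}$ are functions of $\mathcal J_1,\mathcal J_2$ only; since $27\neq0\neq81$ in $\mathbb F$ (as $\operatorname{char}\mathbb F\neq3$), equal invariants force $D=D'$, then $N=N'$, then $K=K'$, i.e.\ (i) and (ii).

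For the three cases I argue as follows. In (1), $a=0$ gives $N(c)=1$, and (i)--(ii) read $N(a')=0$ and $(a')^3c'+\overline{(a')^3c'}=0$; since $N(c')=1$ makes $c'$ invertible, this forces $a'=0$ (immediately if $q$ is elliptic, as $C_0(-q)$ is then a field; in the hyperbolic case by passing to coordinates via \eqref{iso} and using Lemma~\ref{lemma3}). Then $F_{0c}\sim F_{0c'}$ iff $c'/c$ or $c'/\bar c$ is the cube of some $\lambda\in G$, and the point is that this is the same as being a cube in $C_0(-q)$: any $\mu$ with $\mu^3=c'/c$ has $N(\mu)^3=N(c')/N(c)=1$, so $N(\mu)=\omega$ is a cube root of unity in $\mathbb F$, and if $\omega\neq1$ then $\lambda:=\mu\omega\in G$ still satisfies $\lambda^3=\mu^3$. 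Case (2) is analogous but shorter: $c=0$ gives $N(a)=-1$, so $N(c')=0$ and (ii) forces $c'=0$ (field argument if elliptic, coordinates if hyperbolic), whence $\lambda:=a(a')^{-1}\in G$ gives $a'=\lambda^{-1}a$, a transformation of type~(i), i.e.\ in $\mathcal G^0$. In (3), $\Leftarrow$ is the invariance already proved; for $\Rightarrow$ put $z=a^3c$, $z'=(a')^3c'$, so (ii) says $z+\bar z=z'+\bar{z}'$ and, with $N(c')=N(c)$, also $N(z)=N(a)^3N(c)=N(z')$; hence $z$, $\bar z$, $z'$ are all roots of $X^2-(z+\bar z)X+N(z)$, giving $(z-z')(z'-\bar z)=0$. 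When $N(a)\notin\{0,-1\}$ all of $a,a',c,c'$ and $z$ are invertible, so $z'\in\{z,\bar z\}$ (directly in the elliptic case; in the hyperbolic case one also uses $N(z')=N(z)$ to discard the spurious roots of the quadratic in $\mathbb F\times\mathbb F$), and then $\lambda:=a(a')^{-1}$ resp.\ $\lambda:=\bar a(a')^{-1}$ lies in $G$ and realizes a transformation of type~(i) resp.\ (ii). The residual subcases $N(a)=0$ and $N(a)=-1$ force $q$ hyperbolic (else $a=0$ or $c=0$, contrary to~(3)); there exactly one of $a,c$ is a nonzero zero divisor and the other a unit, $N(c)-N(a)=1$ excludes both being zero divisors, and reading off the scalar $t\in\mathbb F^\ast$ matching the nonzero coordinates of $z$ and $z'$ produces $\lambda=(t,t^{-1})\in G$, of type~(i) or~(ii) according to which coordinate survives.

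The main obstacle is precisely the $\Rightarrow$ direction of case~(3) in the hyperbolic, zero-divisor situation: there $X^2-(z+\bar z)X+N(z)$ acquires extra roots in $\mathbb F\times\mathbb F$, so ``same trace and norm'' no longer pins $z'$ down, and one must descend to coordinates, using $N(c)-N(a)=1$ both to rule out the degenerate configuration in which $a$ and $c$ are simultaneously zero divisors and to decide the type of the matching transformation. Everything else reduces to routine bilinear algebra over $C_0(-q)$.
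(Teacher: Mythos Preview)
Your proposal is correct and follows essentially the same route as the paper: verify invariance by direct substitution, invert \eqref{invariants} to recover $N(a)$ and $K(a,c)$ from $\mathcal{J}_1,\mathcal{J}_2$, then split into the three cases with elliptic/hyperbolic sub-analysis via the coordinate isomorphism \eqref{iso}. Two minor differences are worth noting: in case~(3) your ``same trace, same norm'' argument for $z=a^3c$ is a tidy repackaging of the paper's explicit substitution-and-factor computation (the paper writes out $0=[\,a_1^3c_1-(a_1')^3c_1'\,][\,a_1^3c_1-(a_2')^3c_2'\,]$ directly), and in case~(1) you supply the step---absent from the paper's proof---showing that ``$c'/c$ is a cube in $C_0(-q)$'' is equivalent to ``$c'/c=\lambda^3$ for some $\lambda\in G$'' by adjusting a cube root by a scalar cube root of unity.
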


\begin{proof}
If $a^\prime =\lambda^{-1}a,c^\prime =\lambda^3c$ or
$a^\prime =\lambda^{-1}\bar{a},c^\prime =\lambda^3\bar{c}$, then taking account of
the fact that $\lambda ^{-1}=\bar{\lambda }$ as $\lambda \in G$, a
straightforward computation shows that $\mathcal{J}_i\left( F_{ac}\right)
=\mathcal{J}_i\left( F_{a^\prime c^\prime }\right) $, $i=1,2$.

Moreover, solving the equations [1] and [2] in \eqref{invariants} with respect
to $K(a,c)$ and $N(a)$ it follows
\[
\begin{array}
[c]{rl}
K(a,c)= & -27\frac{6\mathcal{J}_1\left( F_{ac}\right) ^2-2\mathcal{J}_2
\left( F_{ac}\right)  ^2-27\mathcal{J}_2\left( F_{ac}\right)
}{\left[ 12\mathcal{J}_1\left( F_{ac}\right) -8\mathcal{J}_2\left(
F_{ac}\right) -27\right] ^2},\medskip\\
N(a)= & 9\frac{\mathcal{J}_2\left(  F_{ac}\right) -\mathcal{J}_1
\left(
F_{ac}\right) }{12\mathcal{J}_1\left( F_{ac}\right) -8\mathcal{J}_2
\left(  F_{ac}\right) -27}.
\end{array}
\]

Hence the equations $\mathcal{J}_i\left( F_{ac}\right)
=\mathcal{J}_i\left( F_{a^\prime c^\prime }\right) $, $i=1,2$, imply
$K(a,c)=K(a^\prime ,c^\prime )$ and $N(a)=N(a^\prime )$.

\medskip

$(1)$ From \eqref{conditions}-(i) it follows $N(a^\prime )=0$, and by virtue
of \eqref{N_c_N_a}, we conclude that $N(c)=N(c^\prime )=1$.

If $q$ is elliptic, this implies $a^\prime =0$, and $c$ and $c^\prime $ are
invertible in $C_0(-q)$. If $F_{ac}$ and $F_{a^\prime c^\prime }$ are
$\mathcal{G}$-equivalent, then $\frac{c^\prime }{c}$ or
$\frac{c^\prime }{\bar{c}}$ belong to the group $G$ defined in Theorem \ref{th1};
the converse is obvious.

If $q$ is hyperbolic, we can apply the isomorphism \eqref{iso}; by using the
notations introduced therein, the formulas (i) and (ii) in \eqref{conditions}
transform respectively into: (i') $a_1^\prime a_2^\prime =0$, (ii')
$(a_1^\prime )^3c_1^\prime +(a_2^\prime )^3c_2^\prime =0$, and
$N(c^\prime )=1$ means (iii') $c_1^\prime c_2^\prime =1$. If
$a_1^\prime =a_2^\prime =0$, (i.e., $a^\prime =0$), then (ii') holds
identically and we can conclude as in the previous case. If, for example, we
had $a_1^\prime \neq 0$, $a_2^\prime =0$, then (ii')
implies $(a_1^\prime )^3c_1^\prime =0$, and since
$c_1^\prime \in \mathbb{F}^\ast $ it follows $a_1^\prime =0$,
thus leading us to a contradiction.

\medskip

$(2)$ From \eqref{N_c_N_a} and \eqref{conditions}-(i) it follows
$N(a)=N(a^\prime )=-1$, $N(c)=N(c^\prime )=0$.

If $q$ is elliptic, this implies $c=c^\prime =0$, and $a$, $a^\prime $ are
invertible in $C_0(-q)$ and $\lambda=\frac{a}{a^\prime }$ belongs to $G.$

If $q$ is hyperbolic, then by using the isomorphism \eqref{iso}, the equation
(ii) in \eqref{conditions} transforms into
(ii') $(a_1^\prime )^3c_1^\prime +(a_2^\prime )^3c_2^\prime =0$,
and furthermore we have
$a_1a_2=a_1^\prime a_2^\prime =-1$, $c_1^\prime c_2^\prime =0$.
If $c_1^\prime =0$, then (ii') becomes $(a_2^\prime )^3c_2^\prime =0$,
and since $a_2^\prime $ is invertible we deduce that
$c_2^\prime =0$; similarly, $c_2^\prime =0$ implies $c_1^\prime =0$.
Hence $c^\prime =0$, in which case we have
$\lambda =\frac{a}{a^\prime }\in G$.

\medskip

$(3)$ If $q$ is elliptic, then $C_0(-q)$ is a field and by virtue of the
assumption it follows that the elements $a$, $c$, $\bar{a}$, $\bar{c}$,
$a^\prime $, $c^\prime $, $\bar{a}^\prime $, and $\bar{c}^\prime $ are
invertible. Letting $a^\prime =\frac{a\bar{a}}{\bar{a}^\prime }$,
$c^\prime =\frac{c\bar{c}}{\bar{c}^\prime }$ into \eqref{conditions}-(ii) we
obtain
$0=(\bar{a}^3\bar{c}
-\bar{a}^{\prime3}\bar{c}^\prime )(a^3c-\bar{a}^{\prime3}\bar{c}^\prime )$.
Hence either $a^3c=(a^\prime )^3c^\prime $ or
$a^3c=(\bar{a}^\prime )^3\bar{c}^\prime $. In the
first case, letting $\lambda=aa^{\prime-1}$, it follows:
$a^\prime =\lambda^{-1}a$, $c^\prime =\lambda^3c$, and in the second case, letting
$\lambda=\bar{a}a^{\prime-1}$, it follows: $a^\prime =\lambda^{-1}\bar{a}$,
$c^\prime =\lambda ^3\bar{c}$. As $N(a)=N(a^\prime )$, we deduce that
$N(\lambda )=1$, or equivalently $\lambda \in G$.

Therefore, by applying Theorem \ref{th1} we conclude that the maps $F_{ac}$
and $F_{a^\prime c^\prime }$ are isomorphic.

If $q$ is hyperbolic, then we use the isomorphism \eqref{iso}, and the
equations \eqref{conditions}-(i)-(ii) transform respectively into the
following:
\[
\begin{array}
[c]{lllll}
\text{(i')} & a_1a_2=a_1^\prime a_2^\prime , &  & \text{(ii')} &
(a_1^\prime )^3c_1^\prime +(a_2^\prime )^3c_2^\prime 
=(a_1)^3c_1+(a_2)^3c_2,
\end{array}
\]
and from \eqref{N_c_N_a} we also deduce (iii') $c_1c_2=c_1^\prime c_2^\prime $.
The equations \eqref{conditions}-(i)-(ii) being invariant
under conjugation, by virtue of the hypothesis we can assume $a_1\neq 0$, and
we distinguish two cases according to whether $c_1\neq 0$ or $c_1=0$ and
$c_2\neq 0$.

\begin{enumerate}
\item If $c_1\neq 0$, then by replacing
$a_2=\frac{a_1^\prime a_2^\prime }{a_1}$ and
$c_2=\frac{c_1^\prime c_2^\prime }{c_1}$
into (ii') we obtain
\[
0=\left[  (a_1)^3c_1-(a_1^\prime )^3c_1^\prime \right]  \left[
(a_1)^3c_1-(a_2^\prime )^3c_2^\prime \right]  .
\]

\begin{itemize}
\item If $(a_{1})^{3}c_{1}=(a_{1}^{\prime})^{3}c_{1}^{\prime}$, then
$a_{1}^{\prime}\neq0$ and $c_{1}^{\prime}\neq0$, and letting $\lambda
=(\lambda_{1},\lambda_{2})$, with $\lambda_{1}=\frac{a_{1}}{a_{1}^{\prime}}$,
$\lambda_{2}=\frac{1}{\lambda_{1}}$, we have $a^{\prime}=\lambda^{-1}a$,
$c^{\prime}=\lambda^{3}c$, $\lambda\in G$.

\item If $(a_{1})^{3}c_{1}=(a_{2}^{\prime})^{3}c_{2}^{\prime}$, then
$a_{2}^{\prime}\neq0$ and $c_{2}^{\prime}\neq0$. Letting $\lambda_{1}
=\frac{a_{2}^{\prime}}{a_{1}}$, $\lambda_{2}=\frac{1}{\lambda_{1}}$, we have
$a^{\prime}=\lambda^{-1}\bar{a}$, $c^{\prime}=\lambda^{3}\bar{c}$, $\lambda\in
G$.
\end{itemize}

\item If $c_1=0$, $c_2\neq 0$, then $N(c)=0$ and
$a_1a_2=a_1^\prime a_2^\prime =-1$ because of \eqref{N_c_N_a},
and letting $a_2=\frac{-1}{a_1}$,
$a_2^\prime =\frac{-1}{a_1^\prime }$ in (ii') we have

\begin{equation}
0=(a_1)^3c_2^\prime -(a_1)^3(a_1^\prime )^6c_1^\prime -(a_1^\prime )^3c_2.
\label{(ii')}
\end{equation}
As $N(c^\prime )=0$, either $c_1^\prime =0$ or $c_2^\prime =0$. In the
first case, the equation \eqref{(ii')} transforms into (ii'-a)
$(a_1)^3c_2^\prime =(a_1^\prime )^3c_2$, whereas in the second it
transforms into (ii'-b) $c_2=-(a_1)^3(a_1^\prime )^3c_1^\prime $.

\begin{itemize}
\item If (ii'-a) holds, then $a^\prime =\lambda^{-1}a$,
$c^\prime =\lambda ^3c$, with $\lambda =(\lambda _1,\lambda _2)$,
$\lambda _1=\frac{a_1}{a_1^\prime }$, $\lambda _2=\frac{1}{\lambda _1}$.

\item If (ii'-b) holds, then $a^\prime =\lambda ^{-1}\bar{a}$,
$c^\prime =\lambda ^3\bar{c}$, with $\lambda =(\lambda _1,\lambda _2)$,
$\lambda _1=\frac{a_2}{a_1^\prime }$, $\lambda _2=\frac{1}{\lambda _1}$.
\end{itemize}
\end{enumerate}

This proves that $F_{ac}$ and $F_{a^\prime c^\prime }$ are
$\mathcal{G}$-equivalent in both cases.
\end{proof}

\section*{Acknowledgments}
This research has been partially supported by Ministerio de Econom\'{\i}a,
Industria y Competitividad (MINECO), Agencia Estatal de Investigaci\'on
(AEI), and Fondo Europeo de Desarrollo Regional (FEDER, UE) under
project COPCIS, reference TIN2017-84844-C2-1-R, and by Comunidad de
Madrid (Spain) under project reference S2013/ICE-3095-CIBERDINE-CM, also
co-funded by European Union FEDER funds.

\end{document}